\newtheorem{definition}{Definition}[section]
\newtheorem{theorem}[definition]{Theorem}
\newtheorem{remark}[definition]{Remark}
\newtheorem{final Remarks}[definition]{Final Remarks}
\newtheorem{lemma}[definition]{Lemma}
\newtheorem{corollary}[definition]{Corollary}
\numberwithin{equation}{section}
\begin{document}
\title{Smooth Structures on a Fake Real Projective Space}
\vspace{2cm}
\author{Ramesh Kasilingam}

\email{rameshkasilingam.iitb@gmail.com  ; mathsramesh1984@gmail.com  }
\address{Statistics and Mathematics Unit,
Indian Statistical Institute,
Bangalore Centre, Bangalore - 560059, Karnataka, India.}
\date{}
\subjclass [2010]{
57R55, 57R50.}
\keywords{
Fake real projective spaces; the Eells-Kuiper $\mu$ invariant; inertia groups; concordance.}
\maketitle
\begin{abstract}
We show that the group of smooth homotopy $7$-spheres acts freely on the set of smooth manifold structures on a topological manifold $M$ which is homotopy equivalent to the real projective $7$-space. We classify, up to diffeomorphism, all closed manifolds homeomorphic to the real projective $7$-space. We also show that $M$ has, up to diffeomorphism, exactly $28$ distinct differentiable structures with the same underlying PL structure of $M$ and $56$ distinct differentiable structures with the same underlying topological structure of $M$. 
\end{abstract}
\section{Introduction}
Throughout this paper $M^m$ will be a closed oriented $m$-manifold and all homeomorphisms and diffeomorphisms are assumed to preserve orientation, unless otherwise stated. Let $\mathbb{R}\textbf{P}^n$ be real projective $n$-space. L$\acute{\rm o}$pez de Medrano \cite{Lop71} and C.T.C. Wall \cite{Wal68, Wal99} classified, up to PL homeomorphism, all closed PL
manifolds homotopy equivalent to  $\mathbb{R}\textbf{P}^n$ when $n>4$. This was extended to the topological category by Kirby-Siebenmann \cite[pp. 331]{KS77}. Four-dimensional surgery \cite{FQ90} extends the homeomorphism classification to dimension $4$.\\
\indent In this paper we study up to diffeomorphism all closed manifolds homeomorphic to $\mathbb{R}\textbf{P}^7$. Let $M$ be a closed smooth manifold homotopy equivalent to $\mathbb{R}\textbf{P}^7$. 
In section 2, we show that if a closed smooth manifold $N$ is PL-homeomorphic to $M$, then there is a unique homotopy $7$-sphere $\Sigma^7\in \Theta_7$ such that $N$ is diffeomorphic to $M\#\Sigma^7$, where $\Theta_7$ is the group of smooth homotopy spheres defined by M. Kervaire and J. Milnor in \cite{KM63}. In particular, $M$ has, up to diffeomorphism, exactly $28$ distinct differentiable structures with the same underlying PL structure of $M$.\\
In section 3, we show that if a closed smooth manifold $N$ is homeomorphic to $M$, then there is a unique homotopy $7$-sphere $\Sigma^7\in \Theta_7$ such that $N$ is diffeomorphic to either $M\#\Sigma^7$ or $\widetilde{M}\#\Sigma^7$, where $\widetilde{M}$ represents the non-zero concordance class of PL-structure on $M$. We also show that the group of smooth homotopy $7$-spheres $\Theta_7$ acts freely on the set of smooth manifold structures on a manifold $M$.
\section{Smooth structures with the same underlying PL structure of a fake real projective space}
We recall some terminology from \cite{KM63}:
\begin{definition}\rm \cite{KM63}
\begin{itemize}
\item[(a)] A homotopy $m$-sphere $\Sigma^m$ is a smooth closed manifold homotopy equivalent to the standard unit sphere $\mathbb{S}^m$ in $\mathbb{R}^{m+1}$.
\item[(b)]A homotopy $m$-sphere $\Sigma^m$ is said to be exotic if it is not diffeomorphic to $\mathbb{S}^m$.
\item[(c)] Two homotopy $m$-spheres $\Sigma^{m}_{1}$ and $\Sigma^{m}_{2}$ are said to be equivalent if there exists a diffeomorphism $f:\Sigma^{m}_{1}\to \Sigma^{m}_{2}$.
\end{itemize}
The set of equivalence classes of homotopy $m$-spheres is denoted by $\Theta_m$. The equivalence class of $\Sigma^m$ is denoted by [$\Sigma^m$]. M. Kervaire and J. Milnor \cite{KM63} showed that $\Theta_m$ forms a finite abelian group with group operation given by connected sum $\#$ except possibly when $m=4$ and the zero element represented by the equivalence class  of $\mathbb{S}^m$.
\end{definition}
\begin{definition}\rm
Let $M$ be a closed PL-manifold. Let $(N,f)$ be a pair consisting of a closed PL-manifold $N$ together with a homotopy equivalence $f:N\to M$. Two such pairs $(N_{1},f_{1})$ and $(N_{2},f_{2})$ are equivalent provided there exists a PL homeomorphism $g:N_{1}\to N_{2}$ such that $f_{2}\circ g$ is homotopic to $f_{1}$. The set of all such equivalence classes is denoted by $\mathcal{S}^{PL}(M)$.
\end{definition}
\begin{definition}\rm($Cat=Diff~~{\rm{or}}~~ PL$-structure sets)
Let $M$ be a closed $Cat$-manifold. Let $(N,f)$ be a pair consisting of a closed $Cat$-manifold $N$ together with a homeomorphism $f:N\to M$. Two such pairs $(N_{1},f_{1})$ and $(N_{2},f_{2})$ are concordant provided there exists a $Cat$-isomorphism $g:N_{1}\to N_{2}$ such that the composition $f_{2}\circ g$ is topologically concordant to $f_{1}$, i.e., there exists a homeomorphism $F: N_{1}\times [0,1]\to M\times [0,1]$ such that $F_{|N_{1}\times 0}=f_{1}$ and $F_{|N_{1}\times 1}=f_{2}\circ g$. The set of all such concordance classes is denoted by $\mathcal{C}^{Cat}(M)$.\\
We will denote the class in $\mathcal{C}^{Cat}(M)$ of $(N,f)$ by $[N,f]$. The base point of $\mathcal{C}^{Cat}(M)$ is the equivalence class $[M,Id]$ of $Id : M\to M$.\\
\indent We will also denote the class in $\mathcal{C}^{Diff}(M)$ of $(M^n\#\Sigma^n, \rm{Id})$ by $[M^n\#\Sigma^n]$. (Note that $[M^n\#\mathbb{S}^n]$ is the class of $(M^n, \rm{Id})$.)
\end{definition}
\begin{definition}\rm
Let $M$ be a closed PL-manifold. Let $(N,f)$ be a pair consisting of a closed smooth manifold $N$ together with a PL-homeomorphism $f:N\to M$. Two such pairs $(N_{1},f_{1})$ and $(N_{2},f_{2})$ are PL-concordant provided there exists a diffeomorphism $g:N_{1}\to N_{2}$ such that the composition $f_{2}\circ g$ is PL-concordant to $f_{1}$, i.e., there exists a PL-homeomorphism $F: N_{1}\times [0,1]\to M\times [0,1]$ such that $F_{|N_{1}\times 0}=f_{1}$ and $F_{|N_{1}\times 1}=f_{2}\circ g$. The set of all such concordance classes is denoted by $\mathcal{C}^{PDiff}(M)$.
\end{definition}
\begin{definition}\rm{
Let $M^m$ be a closed smooth $m$-dimensional manifold. The inertia group $I(M)\subset \Theta_{m}$ is defined as the set of $\Sigma \in \Theta_{m}$ for which there exists a diffeomorphism $\phi :M\to M\#\Sigma$.\\
The concordance inertia group $I_c(M)$ is defined as the set of all $\Sigma\in I(M)$ such that $M\#\Sigma$ is concordant to $M$.}
\end{definition}
The key to analyzing $\mathcal{C}^{Diff}(M)$ and $\mathcal{C}^{PDiff}(M)$ are the following results. 
\begin{theorem}{\rm (Kirby and Siebenmann, \cite[pp. 194]{KS77})}\label{kirby} 
There exists a connected $H$-space $Top/O$ such that there is a bijection between $\mathcal{C}^{Diff}(M)$ and $[M, Top/O]$ for any smooth manifold $M$ with $\dim M\geq5$. Furthermore, the concordance class of given smooth structure of $M$ corresponds to the homotopy class of the constant map under this bijection. 
\end{theorem}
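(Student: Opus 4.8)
The plan is to derive this from the foundational smoothing-theory package of Kirby and Siebenmann: the \emph{Product Structure Theorem}, the theorem that \emph{concordance implies isotopy}, and the resulting classification of smoothings of a topological manifold by tangential data. Since the statement is itself attributed to \cite{KS77}, I would treat these as established and organise the argument around them rather than reprove them. First I would recall that for a topological manifold $M^m$ with $m\geq 5$, the Product Structure Theorem lets one pass freely between smoothings of $M$ and of $M\times\mathbb{R}^k$, which places the problem in the stable range; combined with the local-to-global principle for smoothings, this identifies the set $\mathcal{C}^{Diff}(M)$ of concordance classes of smooth structures on $M$ with the set of vertical homotopy classes of lifts of the classifying map $\tau_M\colon M\to BTop$ of the stable topological tangent microbundle along the fibration $\pi\colon BO\to BTop$ whose homotopy fibre is $Top/O$.

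Next I would use that $M$ is assumed smooth: its stable smooth tangent bundle provides a distinguished lift $\widetilde{\tau}_M\colon M\to BO$ of $\tau_M$, so the set of lifts is nonempty. The space $Top/O$ carries an $H$-space structure (indeed an infinite loop space structure) induced by Whitney sum of bundles, and by standard obstruction theory for a fibration with $H$-space fibre, once a single lift is fixed the set of vertical homotopy classes of lifts is in natural bijection with $[M, Top/O]$, with the fixed lift corresponding to the constant map. Taking $\widetilde{\tau}_M$ as the fixed lift then yields a bijection $\mathcal{C}^{Diff}(M)\xrightarrow{\ \cong\ }[M, Top/O]$ under which the given smooth structure on $M$ — whose associated lift is exactly $\widetilde{\tau}_M$ — maps to the homotopy class of the constant map, which is the asserted "furthermore" clause. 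Finally, $Top/O$ is connected because $\pi_0(Top/O)$ is squeezed between $\pi_0(PL/O)=0$ and $\pi_0(Top/PL)=0$ (equivalently, every topological microbundle over a point admits a compatible smooth structure, uniquely up to concordance).

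The main obstacle is that the real content — the Product Structure Theorem and concordance-implies-isotopy — is deep, resting on handle straightening and the torus trick; but since the theorem is cited from \cite{KS77}, I would invoke these as black boxes. The only genuinely new work left is the formal homotopy-theoretic step: verifying that vertical homotopy classes of lifts over the $H$-space fibre $Top/O$ form a torsor under $[M, Top/O]$ and that the distinguished smoothing goes to the null-homotopic map. This is routine obstruction theory once the identification of $\mathcal{C}^{Diff}(M)$ with lifts of $\tau_M$ is granted, and the $H$-space hypothesis is precisely what makes the resulting bijection canonical and basepoint-independent.
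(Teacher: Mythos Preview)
The paper does not give a proof of this theorem: it is stated, attributed to Kirby and Siebenmann with a page reference to \cite{KS77}, and immediately followed by the Cairns--Hirsch--Mazur theorem, with no intervening argument. There is therefore no ``paper's own proof'' to compare your proposal against.

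For what it is worth, your outline is a correct and standard reconstruction of how this result is obtained in \cite{KS77}: the Product Structure Theorem and concordance-implies-isotopy reduce concordance classes of smoothings to vertical homotopy classes of lifts of $\tau_M\colon M\to BTop$ along $BO\to BTop$; the given smooth structure supplies a basepoint lift; the $H$-space structure on the fibre $Top/O$ turns the set of lifts into a torsor under $[M,Top/O]$ with the basepoint going to the constant map; and connectedness of $Top/O$ follows from the fibration $PL/O\to Top/O\to Top/PL$ together with $\pi_0(PL/O)=\pi_0(Top/PL)=0$. Since the present paper treats the statement purely as a citation, your write-up goes well beyond what the paper itself provides.
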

\begin{theorem}{\rm (Cairns-Hirsch-Mazur, \cite{HM74})}\label{cairns} 
Let $M^m$ be a closed smooth manifold of dimension $m\geq 1$. Then there exists a connected $H$-space $PL/O$ such that there is a bijection between $\mathcal{C}^{PDiff}(M)$ and $[M, PL/O]$. Furthermore, the concordance class of the given smooth structure of $M$ corresponds to the homotopy class of the constant map under this bijection. 
\end{theorem}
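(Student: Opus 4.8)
The plan is to reduce the global problem of classifying smooth structures on the PL manifold $M$ up to PL-concordance to a bundle-theoretic problem on the tangent microbundle of $M$, and then to solve the latter by obstruction theory, using that $PL/O$ is a highly connected $H$-space. Throughout one regards $M$ as a PL manifold via the Whitehead triangulation of its given smooth structure, so that $\mathcal{C}^{PDiff}(M)$ is the set of smoothings of this PL manifold up to PL-concordance.

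\emph{Step 1 (reduction to the tangent microbundle).} The first and main step is the Cairns--Hirsch smoothing theorem in the form given by Hirsch--Mazur: for a PL manifold $M^m$ with $m\ge 5$, the forgetful operation carrying a smoothing of $M$ to the induced smoothing of the PL tangent microbundle $\tau_M$ induces a bijection from $\mathcal{C}^{PDiff}(M)$ onto the set of concordance classes of smoothings of $\tau_M$. This is proved by a handle-by-handle (equivalently, simplex-by-simplex) induction resting on the PL Product Structure Theorem together with the fact that a smoothing of a PL manifold is determined near a subpolyhedron up to concordance; I would simply quote it from \cite{HM74}. In dimensions $m\le 4$ one argues directly: by Cerf and the classical low-dimensional smoothing results every PL manifold of dimension $\le 7$ admits a smoothing, unique up to concordance in dimensions $\le 6$, so both sides of the asserted bijection are singletons when $m\le 6$, consistently with $PL/O$ being $6$-connected. (Smoothability is never in question here, since $M$ is already smooth; only the counting matters.)

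\emph{Step 2 (classifying microbundle smoothings).} A smoothing of the stable PL tangent microbundle $\tau_M$ is, up to concordance, the same as a lift of the classifying map $\tau_M\colon M\to BPL$ across the fibration $BO\to BPL$ whose homotopy fibre is by definition $PL/O$. Since $M$ carries a given smooth structure, a lift $\ell_0\colon M\to BO$ is present, the set of concordance classes of smoothings of $\tau_M$ is nonempty, and I take $\ell_0$ as base point. The space $PL/O$ is connected (because $O\to PL$ is onto $\pi_0$), and in fact $6$-connected with $\pi_7(PL/O)\cong\Theta_7$; it carries an $H$-space structure coming from Whitney sum of bundles, equivalently from its infinite loop space structure as the fibre of the infinite loop map $BO\to BPL$. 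Using this $H$-structure one has a ``difference smoothing'' operation: twisting a lift by a map $M\to PL/O$ produces another lift, and this identifies the pointed set of concordance classes of smoothings of $\tau_M$ with the pointed set $[M,PL/O]$, the base point $\ell_0$ going to the class of the constant map. Composing with the bijection of Step 1 yields the theorem, the connected $H$-space in the statement being $PL/O$ itself.

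\emph{Main obstacle.} Essentially all of the difficulty is concentrated in Step 1: the equivalence between global smoothings of the manifold $M$ and smoothings of its tangent microbundle is the substantive Cairns--Hirsch/Hirsch--Mazur theorem and rests on the Product Structure Theorem in the PL category, with no short self-contained route, so it is used here as a cited black box. The only genuine subtlety in Step 2 is the identification of the torsor of lifts of $\tau_M$ with $[M,PL/O]$, which relies on the standard but nontrivial fact that $PL/O$ is a grouplike $H$-space (indeed an infinite loop space), so that $BO\to BPL$ behaves as a principal fibration over the simply connected base $BPL$.
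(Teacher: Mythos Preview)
The paper does not give a proof of this statement: it is quoted as a named result of Cairns--Hirsch--Mazur with a citation to \cite{HM74}, and the text moves on immediately to use it. So there is no ``paper's own proof'' to compare against.

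Your outline is the standard one from smoothing theory and is essentially how the result is established in \cite{HM74}: first the Cairns--Hirsch theorem (via the Product Structure Theorem and a handle/simplex induction) reduces PL-concordance classes of smoothings of $M$ to concordance classes of vector-bundle reductions of the PL tangent microbundle; then the classification of lifts of $\tau_M\colon M\to BPL$ across $BO\to BPL$ identifies these with $[M,PL/O]$, using that $PL/O$ is a grouplike $H$-space (indeed an infinite loop space) so that the set of lifts is a torsor for $[M,PL/O]$ with the given smoothing as base point. Your separate treatment of low dimensions via the $6$-connectivity of $PL/O$ and Cerf's $\Gamma_4=0$ is also the standard bookkeeping. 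The only thing to flag is that you are, as you acknowledge, importing the Product Structure Theorem and the Hirsch--Mazur tangent-bundle reduction as black boxes; that is exactly what the paper does too, by citing \cite{HM74} for the whole theorem.
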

Recall that the group of smooth homotopy $m$-spheres $\Theta_m$ acts on $\mathcal{C}^{Cat}(M^m)$, where $Cat=Diff$ or $PDiff$, by $$\Theta_m\times \mathcal{C}^{Cat}(M^m)\mapsto \mathcal{C}^{Cat}(M^m),~~([\Sigma], [N,f])\mapsto [N\#\Sigma,f]$$ where we regard the connected sum $N\#\Sigma$ as a smooth manifold with the same underlying topological space as $N$ and with smooth structure differing from that of $N$ only on an $m$-disc. 
\begin{theorem}\label{pdiff}
Let $M^7$ be a closed smooth $7$-manifold. Then the group $\Theta_7$ acts freely and transitively on $\mathcal{C}^{PDiff}(M)$.
In particular, $$\mathcal{C}^{PDiff}(M)=\left \{[M\#\Sigma]~~|~~ \Sigma\in \Theta_{7} \right \}.$$
\end{theorem}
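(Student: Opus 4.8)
The plan is to transport the whole statement across the Cairns--Hirsch--Mazur bijection of Theorem~\ref{cairns} and reduce it to obstruction theory together with an identification of the geometric $\Theta_7$-action with a translation action on a cohomology group.

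First I would record the low-dimensional homotopy type of $PL/O$: by smoothing theory $\pi_i(PL/O)\cong\Theta_i$ for $i\ge5$, and since $\Theta_i=0$ for $1\le i\le6$ (Kervaire--Milnor), $PL/O$ is $6$-connected with $\pi_7(PL/O)\cong\Theta_7$. As $M^7$ admits a CW structure of dimension $7$ and $PL/O$ is $6$-connected and simple (it is an $H$-space by Theorem~\ref{cairns}, so $\pi_1$ acts trivially on higher homotopy), primary obstruction theory gives a bijection
$$[M,PL/O]\ \xrightarrow{\ \cong\ }\ H^7\bigl(M;\pi_7(PL/O)\bigr)=H^7(M;\Theta_7),$$
sending a map to its characteristic class; since $PL/O$ is an $H$-space this is an isomorphism of abelian groups, with the constant map going to $0$. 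As $M$ is closed and oriented, evaluation on the fundamental class identifies $H^7(M;\Theta_7)\cong\Theta_7$.

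Next I would identify the action. Let $c\colon M\to\mathbb{S}^7$ collapse the complement of an embedded open $7$-disc $D\subset M$; it has degree one, hence $c^*\colon H^7(\mathbb{S}^7;\Theta_7)\to H^7(M;\Theta_7)$ is an isomorphism. Forming $N\#\Sigma$ along $D$ changes the smooth structure only on $D$ rel $\partial D$, and this change is classified by the element of $[D/\partial D,PL/O]=[\mathbb{S}^7,PL/O]\cong\Theta_7$ corresponding to $\Sigma$; since such a modification factors through $c$, the classifying map of $N\#\Sigma$ is the $H$-space sum of that of $N$ with $c$ followed by a representative of $\Sigma$. Under the bijection of Theorem~\ref{cairns} this says precisely that the $\Theta_7$-action on $[M,PL/O]\cong H^7(M;\Theta_7)$ is $\alpha\mapsto\alpha+c^*(\iota_\Sigma)$, i.e.\ translation through the isomorphism $\Theta_7\xrightarrow{\ \cong\ }H^7(M;\Theta_7)$ induced by $c^*$. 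Translation through an isomorphism is free and transitive, which proves the first assertion; and transitivity together with the fact that $[M,\mathrm{Id}]$ is the orbit point corresponding to $0$ shows that every class of $\mathcal{C}^{PDiff}(M)$ equals $[\Sigma]\cdot[M,\mathrm{Id}]=[M\#\Sigma,\mathrm{Id}]=[M\#\Sigma]$ for a unique $\Sigma\in\Theta_7$.

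The step I expect to be the main obstacle is the identification in the third paragraph: verifying carefully that, under the Cairns--Hirsch--Mazur correspondence, the geometric connected sum with $\Sigma$ becomes translation by $c^*(\iota_\Sigma)$ --- i.e.\ that the smooth structure of $M\#\Sigma$, which differs from that of $M$ only on a disc, is recorded exactly by the restriction of the classifying map to that disc, and that this modification is the $H$-space sum. The computation of $\pi_*(PL/O)$ and the obstruction-theoretic identity $[M,PL/O]\cong H^7(M;\Theta_7)$ are comparatively routine.
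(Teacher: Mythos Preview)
Your proposal is correct and follows essentially the same route as the paper: both arguments use the $6$-connectivity of $PL/O$, the collapse map $M\to M/M^{(6)}=\mathbb{S}^7$, and primary obstruction theory to identify $[M,PL/O]\cong H^7(M;\pi_7(PL/O))\cong\Theta_7$, and both reduce the $\Theta_7$-action to the induced map $c^*$ (what the paper calls $f_M^*$). Your write-up is in fact slightly more explicit than the paper's on the one point you flag as delicate---the paper simply asserts that under Theorem~\ref{cairns} the map $f_M^*$ becomes $[\Sigma]\mapsto[M\#\Sigma]$, whereas you spell out why connected sum corresponds to $H$-space translation by $c^*(\iota_\Sigma)$.
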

\begin{proof}
For any degree one map $f_{M}:M^7\to \mathbb{S}^{7}$, we have a homomorphism
$$f_{M}^*:[\mathbb{S}^{7}, PL/O]\to [M^7, PL/O] $$ and in terms of the identifications
\begin{center}
$\Theta_7=[S^7, PL/O]$ and $\mathcal{C}^{PDiff}(M)=[M^7, PL/O]$
\end{center}
given by Theorem \ref{cairns}, $f_{M}^*$ becomes $[\Sigma]\mapsto [M\#\Sigma]$.  Therefore, to show that $\Theta_7$ acts freely and transitively on $\mathcal{C}^{PDiff}(M)$, it is enough to prove that $$f_{M}^*:[\mathbb{S}^{7}, PL/O]\to [M, PL/O]$$ is bijective. Let $M^{(6)}$ be the $6$-skeleton of a CW-decomposition for $M$ containing just one $7$-cell. Such a decomposition exists by \cite{Wal67}. Let $f_{M}:M\to M/M^{(6)}=\mathbb{S}^7$ be the collapsing map. Now consider the Barratt-Puppe sequence for the inclusion $i:M^{(6)}\hookrightarrow M$ which induces the exact sequence of abelian groups on taking homotopy classes $[-,PL/O]$
$$\cdots \to [SM^{(6)}, PL/O]{\to}[S^{7},PL/O] \stackrel{f^{*}_{M}}{\to} [M, PL/O] \stackrel{i^{*}}{\to}[M^{(6)}, PL/O]\cdots,$$ where $SM$ is the suspension of $M$. As $PL/O$ is $6$-connected \cite{KM63,CE68}, it follows that any map from $M^{(6)}$ to $PL/O$ is null-homotopic (see \cite[Theorem 7.12]{DK01}). Therefore $i^{*}: [M, PL/O] \to [M^{(6)}, PL/O]$ is the zero homomorphism and so $f_{M}^*:[\mathbb{S}^{7}, PL/O]\to [M, PL/O]$ is surjective. Since $M$ is of dimension $7$ and the fact that $PL/O$ is $6$-connected, then by Obstruction theory \cite{HM74} implies that the primary obstruction to a null-homotopy $$Ob_{PL/O}:[M^7,PL/O]\to H^{7}(M^7;\pi_7(PL/O))\cong \mathbb{Z}_{28}$$ is an isomorphism. Therefore the homomorphism $f_{M}^*:[\mathbb{S}^{7}, PL/O]\to [M, PL/O]$ is injective, proving the theorem.
\end{proof}
\begin{remark}\rm \indent\label{unique}
\begin{itemize}
\item[(i)] If $M^7$ is a closed smooth $7$-manifold, then by Theorem \ref{pdiff}, the concordance inertia group $I_c(M)=0$.
\item[(ii)] If $(N,f)$ represents an element in $\mathcal{C}^{PDiff}(M^7)$, then by Theorem \ref{pdiff} there is a homotopy $7$-sphere $\Sigma^7\in \Theta_7$ such that $N$ is diffeomorphic to $M\#\Sigma^7$. Moreover, if the inertia group $I(M)=0$, then such a homotopy $7$-sphere $\Sigma^7$ is unique.
\end{itemize}
\end{remark}
We now use the Eells-Kuiper $\mu$ invariant \cite{EK62, TZ13} to study the inertia group of smooth manifolds homotopy equivalent to $\mathbb{R}\textbf{P}^7$.
We recall the definition of the Eells-Kuiper $\mu$ invariant in dimension $7$. Let $M$ be a $7$-dimensional closed oriented spin smooth manifold such that the $4$-th cohomology group $H^4(M;\mathbb{R})$ vanishes. Since the spin cobordism group $\Omega^{Spin}_7$ is trivial \cite{Mil63}, $M$ bounds a compact oriented spin smooth manifold $N$. Then the first Pontrjagin class $p_1(N)\in H^4(N, M; \mathbb{Q})$ is well-defined. The Eells-Kuiper differential invariant $\mu(M)\in \mathbb{R}/\mathbb{Z}$
of $M$ is given by $$\mu(M)=\frac{p_1^2(N)}{2^7\times 7}-\frac{Sign(N)}{2^5\times 7}~~~{\rm mod(\mathbb{Z})},$$ where $p^2_1(N)$ denotes the corresponding Pontrjagin number and ${\rm Sign(N)}$ is the Signature of $N$. 
\begin{lemma}\label{inerreal}
Let $M$ be a closed smooth spin $7$-manifold such that $H^4(M;\mathbb{R})=0.$ Then $I(M)=0$.
\end{lemma}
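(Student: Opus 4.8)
The plan is to use the Eells--Kuiper $\mu$ invariant as a diffeomorphism obstruction, exactly as suggested by the sentence preceding the lemma. First I would record that the hypotheses are inherited by connected sums with homotopy spheres: for any $\Sigma^7\in\Theta_7$ the connected sum $M\#\Sigma^7$ has the same underlying topological space as $M$ away from an embedded disc, so it is homeomorphic (indeed PL-homeomorphic) to $M$; in particular it is again a closed oriented spin $7$-manifold with $H^4(M\#\Sigma^7;\mathbb{R})\cong H^4(M;\mathbb{R})=0$. Hence $\mu(M\#\Sigma^7)\in\mathbb{R}/\mathbb{Z}$ is defined, and by the triviality of $\Omega^{Spin}_7$ both $M$ and $M\#\Sigma^7$ admit spin coboundaries.

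The two facts about $\mu$ that I would invoke are: (a) $\mu$ is invariant under orientation-preserving diffeomorphism and additive under connected sum, i.e. $\mu(M\#\Sigma^7)=\mu(M)+\mu(\Sigma^7)$; and (b) the restriction $\mu|_{\Theta_7}\colon\Theta_7\to\mathbb{R}/\mathbb{Z}$ is an injective homomorphism with image the cyclic subgroup $\tfrac{1}{28}\mathbb{Z}/\mathbb{Z}$. Both are classical results of Eells and Kuiper \cite{EK62} (see also \cite{TZ13}); for (a) one chooses spin coboundaries $N_M$ of $M$ and $N_\Sigma$ of $\Sigma^7$ and glues them along a disc in the boundary to obtain a spin coboundary of $M\#\Sigma^7$, observing that $p_1^2$ and the signature are additive under this gluing, while for (b) one uses that the Milnor generator of $\Theta_7=bP_8$ bounds a parallelizable $8$-manifold of signature $8$, giving $\mu=\tfrac{1}{28}$ on a generator.

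Granting (a) and (b) the lemma follows in one line: if $\Sigma^7\in I(M)$ there is an orientation-preserving diffeomorphism $\phi\colon M\to M\#\Sigma^7$, whence by (a) $\mu(M)=\mu(M\#\Sigma^7)=\mu(M)+\mu(\Sigma^7)$ in $\mathbb{R}/\mathbb{Z}$, so $\mu(\Sigma^7)=0$, and then (b) forces $[\Sigma^7]=0$ in $\Theta_7$. Therefore $I(M)=0$.

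The only real content is the verification of (a) and the recollection that $\mu$ separates all $28$ classes in $\Theta_7$; since these are standard I do not expect a genuine obstacle, but a point worth stating explicitly is that all diffeomorphisms in this paper are orientation-preserving --- this is essential, as an orientation-reversing diffeomorphism would only give $\mu(\Sigma^7)=-\mu(M\#\Sigma^7)+\cdots$ and the cancellation above would fail.
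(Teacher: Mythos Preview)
Your proposal is correct and follows essentially the same route as the paper: invoke diffeomorphism invariance and additivity of the Eells--Kuiper $\mu$ invariant to get $\mu(\Sigma)=0$, then use that $\mu$ distinguishes all $28$ classes in $\Theta_7$ to conclude $\Sigma\cong\mathbb{S}^7$. The paper's argument is just the one-line version of what you wrote, omitting the verification that $M\#\Sigma$ inherits the spin and $H^4(-;\mathbb{R})=0$ hypotheses and the remark about orientation-preserving diffeomorphisms; your added care on these points is appropriate.
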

\begin{proof}
Our assumption on $M$ and using the additivity of the Eells-Kuiper differential invariant $\mu$ with respect to connected sums, if $\Sigma\in I(M)$, then $$\mu(M)=\mu(M\#\Sigma)=\mu(M)+\mu(\Sigma).$$ Therefore $\mu(\Sigma)=0$ in $\mathbb{R}/\mathbb{Z}$ would imply that $\Sigma$ is diffeomorphic to $\mathbb{S}^7$, since Eells and Kuiper \cite{EK62} showed that $\mu(\Sigma_M^{\#m})=\frac{m}{28}$, where $\Sigma_M$ is a generator of $\Theta_7$. Thus $I(M)=0$.
\end{proof}
\begin{remark}\label{realine}\rm
By Lemma \ref{inerreal}, we can now prove the following.
\begin{itemize}
\item[(i)] If a closed smooth manifold $M$ is homotopy equivalent to $\mathbb{R}\textbf{P}^7$, then $M$ is a spin manifold with $H^4(M;\mathbb{R})=0$ and hence $I(M)=0$.
\item[(ii)] If $M$ is a closed $2$-connected $7$-manifold such that the group $H_4(M;\mathbb{Z})$ is torsion, then $M$ is a spin manifold with $H^4(M;\mathbb{R})=0$ and hence $I(M)=0$.
\end{itemize}
\end{remark}
By Lemma \ref{inerreal} and Remark \ref{unique}(ii), we have
\begin{theorem}\label{clasdiff}
Let $M$ be a closed smooth spin $7$-manifold such that $H^4(M;\mathbb{R})=0$ and $N$ be a closed smooth manifold PL-homeomorphic to $M$. Then there is a unique homotopy $7$-sphere $\Sigma^7\in \Theta_7$ such that $N$ is diffeomorphic to $M\#\Sigma^7$.
\end{theorem}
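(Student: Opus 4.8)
The plan is to deduce this statement directly from Theorem \ref{pdiff} together with Lemma \ref{inerreal}, since essentially all the work has already been done. First I would record that $M$ is, by hypothesis, a closed smooth spin $7$-manifold with $H^4(M;\mathbb{R})=0$, so Lemma \ref{inerreal} applies and yields $I(M)=0$. This vanishing of the inertia group is the only extra input needed, and its role will be to upgrade the uniqueness-up-to-concordance coming out of Theorem \ref{pdiff} to uniqueness-up-to-diffeomorphism.

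For the existence of $\Sigma^7$: by hypothesis there is a PL-homeomorphism $f:N\to M$, which I would regard as representing an element $[N,f]\in\mathcal{C}^{PDiff}(M)$, using the underlying PL structures of the two smooth manifolds $N$ and $M$. By Theorem \ref{pdiff}, $\Theta_7$ acts freely and transitively on $\mathcal{C}^{PDiff}(M)$, and $\mathcal{C}^{PDiff}(M)=\{[M\#\Sigma]\mid\Sigma\in\Theta_7\}$; hence there is a (unique) $\Sigma^7\in\Theta_7$ with $[N,f]=[M\#\Sigma^7]$ in $\mathcal{C}^{PDiff}(M)$. Unwinding the definition of $\mathcal{C}^{PDiff}(M)$, an equality of PL-concordance classes supplies in particular a diffeomorphism $N\to M\#\Sigma^7$, which is the desired conclusion. (Equivalently, this is exactly the content of Remark \ref{unique}(ii) once $I(M)=0$ is known.)

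For uniqueness at the level of diffeomorphism: suppose $N$ is diffeomorphic both to $M\#\Sigma_1$ and to $M\#\Sigma_2$. Then $M\#\Sigma_1$ is diffeomorphic to $M\#\Sigma_2$; taking the connected sum of both sides with $-\Sigma_1$ and using that $\Sigma_1\#(-\Sigma_1)$ is diffeomorphic to $\mathbb{S}^7$, one obtains a diffeomorphism $M\cong M\#(\Sigma_2\#(-\Sigma_1))$, so $\Sigma_2\#(-\Sigma_1)\in I(M)$. Since $I(M)=0$ by Lemma \ref{inerreal}, we get $\Sigma_1=\Sigma_2$ in $\Theta_7$, proving uniqueness.

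I do not anticipate a genuine obstacle here: the substance is entirely in Theorem \ref{pdiff} (transitivity of the $\Theta_7$-action gives existence) and Lemma \ref{inerreal} (triviality of the inertia group gives uniqueness). The one point that warrants care is that the Cairns--Hirsch--Mazur/concordance machinery a priori produces only a PL-concordance; however, by the very definition of $\mathcal{C}^{PDiff}(M)$ a concordance class already carries an underlying diffeomorphism, so no additional smoothing argument is required.
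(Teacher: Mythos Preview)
Your proof is correct and follows exactly the paper's approach: the paper simply invokes Lemma~\ref{inerreal} (to get $I(M)=0$) together with Remark~\ref{unique}(ii), and your write-up just unpacks those two citations.
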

Applying Theorem \ref{clasdiff}, we immediately obtain
\begin{corollary}
Let $M$ be a closed smooth manifold homotopy equivalent to $\mathbb{R}\textbf{P}^7$. Then $M$ has, up to diffeomorphism, exactly $28$ distinct differentiable structures with the same underlying PL structure of $M$.
\end{corollary}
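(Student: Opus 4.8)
The plan is to extract the number directly from Theorem~\ref{clasdiff}, using Remark~\ref{realine}(i) to verify its hypotheses and the triviality of $I(M)$ to prevent any collapsing. Since $M$ is homotopy equivalent to $\mathbb{R}\textbf{P}^7$, Remark~\ref{realine}(i) shows that $M$ is spin with $H^4(M;\mathbb{R})=0$ and $I(M)=0$, so Theorem~\ref{clasdiff} applies to $M$. Recall that a differentiable structure on $M$ ``with the same underlying PL structure'' is represented by a closed smooth manifold $N$ together with a PL-homeomorphism $N\to M$ (equivalently, by a smoothing of the PL manifold $M$), and two of these are to be counted as the same exactly when the underlying smooth manifolds are diffeomorphic. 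So it suffices to show that the set of diffeomorphism classes of such $N$ has exactly $28$ elements.

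First I would check that the map $\Theta_7\to\{\text{diffeomorphism classes of smoothings of the PL manifold }M\}$, $\Sigma\mapsto[M\#\Sigma]$, is well defined and surjective. It is well defined because each homotopy $7$-sphere $\Sigma$ is PL-homeomorphic to $\mathbb{S}^7$ (the PL Poincar\'e conjecture in dimension $7$), so the underlying PL manifold of $M\#\Sigma$ is PL-homeomorphic to $M\#\mathbb{S}^7=M$; and it is surjective by the existence part of Theorem~\ref{clasdiff}, which says every closed smooth $N$ that is PL-homeomorphic to $M$ is diffeomorphic to $M\#\Sigma$ for some $\Sigma\in\Theta_7$. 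In particular there are at most $|\Theta_7|=28$ such diffeomorphism classes.

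It remains to prove injectivity of $\Sigma\mapsto[M\#\Sigma]$, and this is where $I(M)=0$ is used. If $M\#\Sigma_1$ is diffeomorphic to $M\#\Sigma_2$, then forming the connected sum of each side with $-\Sigma_2$, the inverse of $\Sigma_2$ in $\Theta_7$, and using associativity of $\#$ yields a diffeomorphism $M\#(\Sigma_1\#(-\Sigma_2))\cong M$; hence $\Sigma_1\#(-\Sigma_2)\in I(M)=0$, i.e. $\Sigma_1=\Sigma_2$. (This injectivity is also immediate from the uniqueness part of Theorem~\ref{clasdiff} applied to $N=M\#\Sigma_1$.) Therefore $\Sigma\mapsto[M\#\Sigma]$ is a bijection from $\Theta_7$ onto the set of diffeomorphism classes of smoothings of the PL manifold $M$, and since $|\Theta_7|=28$ by Kervaire--Milnor~\cite{KM63}, the corollary follows. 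No step here presents a real obstacle; the only care needed is the bookkeeping distinguishing ``same underlying PL structure'' from ``diffeomorphic'', together with the standard fact from \cite{KM63} that connected sum with a fixed homotopy sphere descends to a well-defined, group-compatible operation on diffeomorphism classes of manifolds.
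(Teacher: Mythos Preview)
Your proposal is correct and follows exactly the route the paper intends: the paper simply says ``Applying Theorem~\ref{clasdiff}, we immediately obtain'' the corollary, and you have spelled out that application (using Remark~\ref{realine}(i) to check the hypotheses and then invoking both the existence and uniqueness clauses of Theorem~\ref{clasdiff} together with $|\Theta_7|=28$). There is no substantive difference in approach.
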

\begin{remark}\rm
If a closed smooth manifold $M$ is homotopy equivalent to $\mathbb{R}\textbf{P}^n$, where $n=5$ or $6$, then $M$ has exactly $2$ distinct differentiable structures up to diffeomorphism \cite{KS69, KS77,Hir63, HM74}.
\end{remark}
\section{The classification of smooth structures on a fake real projective space}
The following theorem was proved in \cite[Example 3.5.1]{Rud16} for $M=\mathbb{R}\textbf{P}^7$. This proof works verbatim for an arbitrary manifold.
\begin{theorem}\label{plexam}
Let $M$ be a closed smooth manifold homotopy equivalent to $\mathbb{R}\textbf{P}^7$. Then there is a closed smooth manifold $\widetilde{M}$ such that 
\begin{itemize}
\item [(i)] $\widetilde{M}$ is homeomorphic to $M$.
\item[(ii)] $\widetilde{M}$ is not (PL homeomorphic) diffeomorphic to $M$.
\end{itemize}
\end{theorem}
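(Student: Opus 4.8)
The plan is to realize $\widetilde{M}$ as the topological manifold $X$ underlying $M$, re-equipped with a smooth structure whose underlying PL structure fails to be concordant to that of $M$, and then to upgrade ``not concordant'' to ``not PL homeomorphic'' using that $\mathrm{Aut}(\mathbb{Z}_2)$ is trivial.

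I would first set up two facts from smoothing theory. By Kirby--Siebenmann, for a closed topological manifold $X$ with $\dim X\geq 5$ the set of concordance classes of PL structures on $X$ is in bijection, naturally with respect to self-homeomorphisms of $X$, with $[X,Top/PL]$; and $Top/PL\simeq K(\mathbb{Z}_2,3)$, so this set is $H^3(X;\mathbb{Z}_2)$. Next, from the fibration $PL/O\to Top/O\to Top/PL$ and the fact that $PL/O$ is $6$-connected (hence $B(PL/O)$ is $7$-connected and $[X,B(PL/O)]=0$ once $\dim X=7$), the natural map $$\mathcal{C}^{Diff}(X)=[X,Top/O]\longrightarrow[X,Top/PL]=H^3(X;\mathbb{Z}_2),$$ which takes a smooth structure to the concordance class of the PL structure underlying it, is surjective. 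Now set $X=|M|$. As $M\simeq\mathbb{R}\textbf{P}^7$ we have $H^3(X;\mathbb{Z}_2)\cong H^3(\mathbb{R}\textbf{P}^7;\mathbb{Z}_2)\cong\mathbb{Z}_2$; let $[\sigma_M]$ denote the class of the PL structure underlying the given smooth $M$ and let $\alpha$ be the other element. Using surjectivity, pick a smooth structure on $X$ lying over $\alpha$ and let $\widetilde{M}$ be $X$ with this structure. The identity map of $X$ is then a homeomorphism $\widetilde{M}\to M$, which proves (i).

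For (ii), suppose $g\colon\widetilde{M}\to M$ were a PL homeomorphism for the (Whitehead) PL structures underlying these smooth manifolds. Regarded as a self-homeomorphism of $X$, $g$ pulls the PL structure of $M$ back to that of $\widetilde{M}$, so on concordance classes $g^{*}[\sigma_M]=\alpha$. Under the Kirby--Siebenmann identification with $H^3(X;\mathbb{Z}_2)$, which is natural for homeomorphisms of $X$, the operator $g^{*}$ is just the cohomology automorphism induced by $g$; but $H^3(X;\mathbb{Z}_2)\cong\mathbb{Z}_2$ has only the identity automorphism, so $g^{*}=\mathrm{id}$ and therefore $\alpha=[\sigma_M]$, a contradiction. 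Hence $\widetilde{M}$ is not PL homeomorphic to $M$; and since any diffeomorphism of smooth manifolds is a PL homeomorphism of their underlying Whitehead PL structures, $\widetilde{M}$ is not diffeomorphic to $M$ either.

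The one genuinely delicate point I expect is exactly the passage from ``not concordant'' to ``not PL homeomorphic'': in principle an exotic self-homeomorphism of $X$ could carry $[\sigma_M]$ to a non-concordant PL structure, and ruling this out is precisely where one must compute $H^3(M;\mathbb{Z}_2)=\mathbb{Z}_2$ and invoke the triviality of its automorphism group, while being careful to use the Kirby--Siebenmann classification in the form that is natural under homeomorphisms of the base. The remaining ingredients --- that $|M|$ carries a PL structure at all (it does, $M$ being smooth), the surjectivity of the forgetful map to PL concordance classes, and the cohomology computation --- are routine.
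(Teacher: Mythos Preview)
Your construction of $\widetilde{M}$ and the verification of (i) are fine; the surjectivity of $[X,Top/O]\to[X,Top/PL]$ from the $6$-connectivity of $PL/O$ is the right way to produce the smooth structure.

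The argument for (ii), however, has a genuine gap --- and it is precisely at the point you flagged as delicate. The Kirby--Siebenmann bijection between concordance classes of PL structures on $X$ and $H^3(X;\mathbb{Z}_2)$ is \emph{not} natural for self-homeomorphisms in the way you need. The set of concordance classes is only an $H^3(X;\mathbb{Z}_2)$-\emph{torsor}; identifying it with $H^3(X;\mathbb{Z}_2)$ requires choosing a base PL structure (here $\sigma_M$). A self-homeomorphism $g$ of $X$ then acts on concordance classes by the \emph{affine} map
\[
a \longmapsto g^*a + c_g, \qquad c_g = d(g^*\sigma_M,\sigma_M)\in H^3(X;\mathbb{Z}_2),
\]
where $c_g$ is the Casson--Sullivan invariant of $g$ relative to $\sigma_M$. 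The triviality of $\mathrm{Aut}(\mathbb{Z}_2)$ forces only the linear part $g^*$ to be the identity; it says nothing about the translation $c_g$. In your situation the hypothetical PL homeomorphism $g:\widetilde{M}\to M$ has, by construction, $c_g=\alpha\neq 0$, so the induced map on $\mathbb{Z}_2$ is $a\mapsto a+\alpha$, which swaps $0$ and $\alpha$ --- no contradiction arises. What you must actually show is that every self-homeomorphism of $X$ has $c_g=0$, and this is not a formality.

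The paper handles this by passing from $\mathcal{C}^{PL}(M)$ to the simple structure set $\mathcal{S}^{PL}(M)$ and the normal invariant in $[M,F/PL]$. Using the commutative square
\[
\begin{CD}
\mathcal{C}^{PL}(M) @>{j_{TOP}}>> [M,TOP/PL] \\
@VV{\mathcal{F}}V @VV{a_*}V \\
\mathcal{S}^{PL}(M) @>{j_F}>> [M,F/PL]
\end{CD}
\]
together with a Bockstein argument (the isomorphism $\delta:H^3(M;\mathbb{Z}_2)\to H^4(M;\mathbb{Z}_{(2)})$ and Rudyak's criterion) to show that $a_*$ is injective, one gets $j_F(\mathcal{F}([\widetilde{M},k]))\neq 0$, hence $[\widetilde{M},k]\neq[M,\mathrm{Id}]$ in $\mathcal{S}^{PL}(M)$. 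Since every orientation-preserving self-homotopy-equivalence of $M$ is homotopic to the identity, this last inequality upgrades to $\widetilde{M}\not\cong_{PL}M$. In effect, the injectivity of $a_*$ is exactly what forces $c_g=0$ for every self-homeomorphism $g$, and your argument is missing an ingredient of this strength.
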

\begin{proof}
Let $j_{TOP}:\mathcal{C}^{PL}(M)\to [M, TOP/PL]=H^3(M;\mathbb{Z}_2)$ be a bijection given by \cite{KS77, KS69} and $j_{F}:\mathcal{S}^{PL}(M)\to [M, F/PL]$ be the normal invariant map defined by \cite{Sul67, Sul67a}. Then the maps $j_{TOP}$ and $j_{F}$ can be included in the commutative diagram
\begin{displaymath}\label{comm}
    \xymatrix@C=3.5cm{
        \mathcal{C}^{PL}(M) \ar[r]^{j_{TOP}} \ar[d]_{\mathcal{F}} & [M, TOP/PL] \ar[d]^{a_{*}} \\
        \mathcal{S}^{PL}(M) \ar[r]_{j_{F}}       & [M, F/PL] }
\end{displaymath}
where $\mathcal{F}$ is the obvious forgetful map and $a_{∗}$ is induced by the natural map $a:TOP/PL\to F/PL$. Consider an element $[\widetilde{M},k]\in  \mathcal{C}^{PL}(M)$, where $\widetilde{M}$ is a closed PL-manifold and $k:\widetilde{M}\to M$ is a homeomorphism such that 
\begin{equation}\label{map}
j_{TOP}([\widetilde{M},k])\neq 0\in [M, TOP/PL]=H^3(M;\mathbb{Z}_2)\cong \mathbb{Z}_2.
\end{equation}
 Notice that the Bockstein homomorphism $$\delta:\mathbb{Z}_2=H^3(M;\mathbb{Z}_2)\to H^4(M;\mathbb{Z}[2])=\mathbb{Z}_2$$ is an isomorphism, where $\mathbb{Z}[2]$ is the subring of $\mathbb{Q}$ consisting of all irreducible fractions with denominators relatively prime to $2$. Hence $$\delta(j_{TOP}([\widetilde{M},k]))\neq0.$$ So, by \cite[Corollary 3.2.5]{Rud16}, $a_*(j_{TOP}([\widetilde{M},k]))\neq 0$. In view of the above commutativity of the diagram, $$j_{F}( \mathcal{F}([\widetilde{M},k]))=a_*(j_{TOP}([\widetilde{M},k])),$$ i.e., $j_{F}( \mathcal{F}([\widetilde{M},k]))\neq 0$. This implies that $\mathcal{F}([\widetilde{M},k])\neq 0$. Hence $[\widetilde{M},k]\neq [M,Id]$ in $\mathcal{S}^{PL}(M)$. On the other hand, it follows from the obstruction theory that every orientation-preserving homotopy equivalence $h:M\to M$ is homotopic to the identity map. This shows that $\widetilde{M}$ is not PL homeomorphic to $M$. By an obstruction theory given by \cite{HM74}, every PL-manifold of dimension $7$ possesses a compatible differentiable structure. This implies that $\widetilde{M}$ is smoothable such that $\widetilde{M}$ can not be diffeomorphic to $M$. This proves the theorem.
\end{proof}
\begin{theorem}\label{diffclass}
Let $M$ be a closed smooth manifold homotopy equivalent to $\mathbb{R}\textbf{P}^7$. Then $$\mathcal{C}^{Diff}(M)=\left \{[M\#\Sigma, Id], [\widetilde{M}\#\Sigma, k\circ Id]~~|~~ \Sigma\in \Theta_{7} \right \},$$ where $\widetilde{M}$ is the
specific closed smooth manifold given by Theorem \ref{plexam} and $k:\widetilde{M}\to M$ is the homeomorphism as in Equation (\ref{map}). In particular, $M$ has exactly $56$ distinct differentiable structures up to concordance.
\end{theorem}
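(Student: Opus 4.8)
The plan is to carry out the proof of Theorem \ref{pdiff} one stage higher: replace $PL/O$ by $Top/O$, use the identification $\mathcal{C}^{Diff}(M)=[M,Top/O]$ of Theorem \ref{kirby}, and convert everything into arithmetic with the help of the inertia-group vanishing of Remark \ref{realine}(i). Let $f_M:M\to S^7$ collapse the $6$-skeleton, so that under $\Theta_7=[S^7,Top/O]$ and $\mathcal{C}^{Diff}(M)=[M,Top/O]$ the given $\Theta_7$-action becomes translation by the classes $f_M^*([\Sigma])$. The first point is that this action is free: if $[\Sigma]\cdot[N,f]=[N\#\Sigma,f]=[N,f]$ in $\mathcal{C}^{Diff}(M)$, then the definition of $\mathcal{C}^{Diff}$ furnishes a diffeomorphism $N\#\Sigma\to N$, so $\Sigma\in I(N)$; but $N$ is homeomorphic to $M$, hence homotopy equivalent to $\mathbb{R}\textbf{P}^7$, so $I(N)=0$ by Remark \ref{realine}(i), and $\Sigma$ is the standard sphere.

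Next I would compute $|\mathcal{C}^{Diff}(M)|=56$. Applying $[M,-]$ to the fibration $PL/O\to Top/O\to Top/PL\simeq K(\mathbb{Z}_2,3)$ of infinite loop spaces gives an exact sequence of abelian groups
$$[M,PL/O]\stackrel{p_*}{\longrightarrow}[M,Top/O]\stackrel{q_*}{\longrightarrow}[M,Top/PL].$$
Here $[M,PL/O]=\mathcal{C}^{PDiff}(M)=\Theta_7$ by Theorems \ref{cairns} and \ref{pdiff}, while $q_*$ is the forgetful map $\mathcal{C}^{Diff}(M)\to\mathcal{C}^{PL}(M)=H^3(M;\mathbb{Z}_2)\cong\mathbb{Z}_2$. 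That forgetful map is onto, because the closed smooth manifold $\widetilde{M}$ of Theorem \ref{plexam} together with its homeomorphism $k$ has non-zero image (its underlying PL class is non-standard by Theorem \ref{plexam}(ii)), so $|\mathrm{im}\,q_*|=2$. Since $\pi_7(Top/PL)=\pi_8(Top/PL)=0$, the map $\pi_7(PL/O)\to\pi_7(Top/O)$ is an isomorphism; with naturality of $f_M^*$ and the identity $\mathcal{C}^{PDiff}(M)=f_M^*(\Theta_7)$ from Theorem \ref{pdiff}, this yields $\mathrm{im}\,p_*=f_M^*(\Theta_7)$, which by the freeness just established is a free $\Theta_7$-orbit of size $28$. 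Exactness gives $\ker q_*=\mathrm{im}\,p_*$, hence $|\mathcal{C}^{Diff}(M)|=|\ker q_*|\cdot|\mathrm{im}\,q_*|=28\cdot 2=56$.

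Finally I would read off the two orbits. Because $\Theta_7$ acts freely on the $56$-element set $\mathcal{C}^{Diff}(M)$, there are exactly two orbits, each of size $28$. The orbit of $[M,Id]$ is $\{[M\#\Sigma,Id]\mid\Sigma\in\Theta_7\}$ and maps to $0$ under the forgetful map; since that map is surjective and $\Theta_7$-invariant ($\Theta_7$ maps trivially to $[S^7,Top/PL]=0$, so connected sum with a homotopy $7$-sphere leaves the underlying PL concordance class unchanged), the other orbit maps to the non-zero class and contains $[\widetilde{M},k]$, so it is $\{[\widetilde{M}\#\Sigma,k\circ Id]\mid\Sigma\in\Theta_7\}$. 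The $56$ listed classes are pairwise distinct — inside an orbit by freeness, across orbits because their images under the forgetful map differ — which gives the claimed description and count.

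The step I expect to require the most care is the middle one. One must justify that the fibration $PL/O\to Top/O\to Top/PL$ really produces an exact sequence of abelian groups upon applying $[M,-]$ (valid since the spaces are infinite loop spaces and the maps are infinite loop maps), that $q_*$ is literally the forgetful map on structure sets, and — most importantly — that $\mathrm{im}\,p_*$ is the \emph{entire} $\Theta_7$-orbit of $[M,Id]$ rather than a proper quotient of $\Theta_7$; it is precisely this that forces $|\ker q_*|=28$, and it rests on Theorem \ref{pdiff} together with the isomorphism $\pi_7(PL/O)\xrightarrow{\ \sim\ }\pi_7(Top/O)$.
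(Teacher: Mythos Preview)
Your argument is correct and follows a genuinely different route from the paper's. The paper proceeds by a direct element chase: given $[N,f]\in\mathcal{C}^{Diff}(M)$, it first forgets to $\mathcal{C}^{PL}(M)\cong\mathbb{Z}_2=\{[M,Id],[\widetilde{M},k]\}$, and in each case chooses a PL-homeomorphism $h$ to $M$ (resp.\ $\widetilde{M}$) topologically concordant to $f$; the pair $(N,h)$ then lies in $\mathcal{C}^{PDiff}(M)$ (resp.\ $\mathcal{C}^{PDiff}(\widetilde{M})$), and Theorem~\ref{pdiff} supplies the homotopy sphere $\Sigma$. You instead establish freeness of the $\Theta_7$-action first (the paper defers this to Theorem~\ref{free}, and even there invokes Theorem~\ref{diffclass} to write $N\cong\overline{M}\#\Sigma_1$, whereas your direct appeal to $I(N)=0$ from Remark~\ref{realine}(i) is shorter and independent), then count $|\mathcal{C}^{Diff}(M)|=56$ via the exact sequence associated to $PL/O\to Top/O\to Top/PL$, and finally read off the two free $\Theta_7$-orbits. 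Your structural approach makes the distinctness of the $56$ classes immediate, while the paper's ``exactly $56$'' tacitly relies on the freeness proved only afterwards; on the other hand, the paper's chase is more elementary in that it never needs the infinite-loop-space structure or the long exact sequence, only the two separate bijections of Theorems~\ref{kirby} and~\ref{cairns} together with the Kirby--Siebenmann identification $\mathcal{C}^{PL}(M)\cong H^3(M;\mathbb{Z}_2)$.
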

\begin{proof}
Let $[N,f]\in \mathcal{C}^{Diff}(M)$, where $N$ is a closed smooth manifold and $f:N\to M$ be a homeomorphism. Then $(N,f)$ represents an element in $$\mathcal{C}^{PL}(M)\cong H^3(M;\mathbb{Z}_2)=\mathbb{Z}_2=\left \{ [M,Id], [\widetilde{M}, k] \right \},$$ where $\widetilde{M}$ is the specific closed smooth manifold given by Theorem \ref{plexam} and $k:\widetilde{M}\to M$ be a homeomorphism as in Equation (\ref{map}). This implies that $(N,f)$ is either equivalent to $(M,Id)$ or $(\widetilde{M}, k)$ in $\mathcal{C}^{PL}(M)$. Suppose that $(N,f)$ is equivalent to $(M,Id)$ in $\mathcal{C}^{PL}(M)$, then there is a PL-homeomorphism $h:N\to M$ such that $Id\circ h:N\to M$ is topologically concordant to $f:N\to M$. Now consider a pair $(N,h)$ which represents an element in $\mathcal{C}^{PDiff}(M)$. By Theorem \ref{pdiff}, there is a unique homotopy sphere $\Sigma$ such that $(N,h)$ is PL-concordant to $(M\#\Sigma,Id)$. Hence there is a diffeomorphism $\phi:N\to M\#\Sigma$ such that $Id\circ \phi:N\to M$ is topologically concordant to $h:N\to M$. Note that $Id\circ h:N\to M$ is topologically concordant to $f:N\to M$. This implies that $Id\circ \phi:N\to M$ is topologically concordant to $f:N\to M$. Therefore, $(N,f)$ and $(M\#\Sigma,Id)$ represent the same element in $\mathcal{C}^{Diff}(M)$.\\
On the other hand, suppose that $(N,f)$ is equivalent to $(\widetilde{M},k)$ in $\mathcal{C}^{PL}(M)$. This implies that there is a PL-homeomorphism $h:N\to \widetilde{M}$ such that $k\circ h:N\to M$ is topologically concordant to $f:N\to M$. By using the same argument as above, we have that there is a unique homotopy sphere $\Sigma$ and a diffeomorphism $\phi:N\to \widetilde{M}\#\Sigma$ such that $$k\circ Id\circ \phi:N\to\widetilde{M}\#\Sigma\to \widetilde{M}\to M$$ is topologically concordant to $f:N\to M$. Therefore, $(N,f)$ and $(\widetilde{M}\#\Sigma, k\circ Id)$ represent the same element in $\mathcal{C}^{Diff}(M)$.\\
Thus, there is a unique homotopy sphere $\Sigma$ such that $(N,f)$ is either concordant to $(M\#\Sigma, Id)$ or $(\widetilde{M}\#\Sigma, k\circ Id)$ in $\mathcal{C}^{Diff}(M)$. This shows that $$\mathcal{C}^{Diff}(M)=\left \{[M\#\Sigma, Id], [\widetilde{M}\#\Sigma, k\circ Id]~~|~~ \Sigma\in \Theta_{7} \right \}.$$ In particular, $M$ has exactly $56$ distinct differentiable structures up to concordance.
\end{proof}
\begin{theorem}\label{free}
Let $M$ be a closed smooth manifold homotopy equivalent to $\mathbb{R}\textbf{P}^7$. Then $\Theta_7$ acts freely on $\mathcal{C}^{Diff}(M)$.
\end{theorem}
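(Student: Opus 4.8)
The plan is to show that every point of $\mathcal{C}^{Diff}(M)$ has trivial stabilizer under the $\Theta_7$-action, which is equivalent to freeness. Fix $[N,f]\in\mathcal{C}^{Diff}(M)$, so that $N$ is a closed smooth $7$-manifold and $f\colon N\to M$ is a homeomorphism; recall the action is $[\Sigma]\cdot[N,f]=[N\#\Sigma,f]$. Suppose $[\Sigma]$ stabilizes $[N,f]$, i.e. $(N\#\Sigma,f)$ is concordant to $(N,f)$. By the definition of $\mathcal{C}^{Diff}(M)$ this supplies a diffeomorphism $g\colon N\#\Sigma\to N$ and a homeomorphism $F\colon (N\#\Sigma)\times[0,1]\to M\times[0,1]$ restricting to $f$ at one end and to $f\circ g$ at the other. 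The first key step is to observe that this data forces $\Sigma\in I_c(N)$: the diffeomorphism $g$ already exhibits $\Sigma\in I(N)$, and composing $F$ with $f^{-1}\times\mathrm{id}_{[0,1]}$ produces a homeomorphism $(N\#\Sigma)\times[0,1]\to N\times[0,1]$ which is a topological concordance between $g$ and the canonical identification $N\#\Sigma\to N$; hence $N\#\Sigma$ is concordant to $N$, so $\Sigma\in I_c(N)\subseteq I(N)$.

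The second step is to invoke the inertia computation. Since $f$ is a homeomorphism, $N$ is a closed smooth manifold homeomorphic to $M$, hence homotopy equivalent to $\mathbb{R}\textbf{P}^7$; by Remark \ref{realine}(i) (equivalently Lemma \ref{inerreal}) we get $I(N)=0$, and therefore $I_c(N)=0$. Combining with the first step, the stabilizer of $[N,f]$ is trivial. As $[N,f]$ was arbitrary, $\Theta_7$ acts freely on $\mathcal{C}^{Diff}(M)$.

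An alternative route, which also pins down the orbit structure, is a counting argument using Theorem \ref{diffclass}: that theorem exhibits $\mathcal{C}^{Diff}(M)$ as the union of the $\Theta_7$-orbit of $[M,\mathrm{Id}]$ and the $\Theta_7$-orbit of $[\widetilde{M},k\circ\mathrm{Id}]$, a set of exactly $56$ elements. Each orbit has at most $|\Theta_7|=28$ elements, so the two orbits must be disjoint and each of size precisely $28$; by the orbit–stabilizer relation every stabilizer is trivial, so the action is free.

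I expect the main obstacle to be the bookkeeping in the first step: one must check carefully that a concordance between $(N\#\Sigma,f)$ and $(N,f)$ in $\mathcal{C}^{Diff}(M)$ really descends, after cancelling the homeomorphism $f$, to a concordance witnessing $\Sigma\in I_c(N)$, paying attention throughout to which maps are diffeomorphisms, which are only homeomorphisms, and to the canonical topological identification of $N\#\Sigma$ with $N$. Once that is in hand, the rest is a direct appeal either to Remark \ref{realine}(i) or to the enumeration in Theorem \ref{diffclass}.
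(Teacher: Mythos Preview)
Your proof is correct and follows the same core strategy as the paper: from $[N\#\Sigma,f]=[N,f]$ extract a diffeomorphism $N\#\Sigma\cong N$, then invoke the vanishing of the inertia group via Remark~\ref{realine}(i). Your version is in fact slightly more streamlined than the paper's: you apply Remark~\ref{realine}(i) directly to $N$ (which is itself homotopy equivalent to $\mathbb{R}\textbf{P}^7$), whereas the paper first invokes Theorem~\ref{diffclass} to write $N\cong\overline{M}\#\Sigma_1$ with $\overline{M}\in\{M,\widetilde{M}\}$ and then applies $I(\overline{M})=0$; your shortcut avoids that detour entirely, and the discussion of $I_c(N)$ is unnecessary once you use $I(N)=0$. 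Your alternative counting argument from Theorem~\ref{diffclass} is also valid and gives the same conclusion by orbit--stabilizer.
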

\begin{proof}
Suppose $[N\#\Sigma,f]=[N,f]$ in $\mathcal{C}^{Diff}(M)$. Then $N\#\Sigma\cong N$. Since by Theorem \ref{diffclass}, there is a homotopy sphere $\Sigma_1$ such that $N\cong \overline{M}\#\Sigma_1$, where $\overline{M}=M$ or $\widetilde{M}$. This implies that $$\overline{M}\#\Sigma_1\#\Sigma^{-1}\cong \overline{M}\#\Sigma_1$$ and hence $\Sigma_1\#\Sigma^{-1}\#\Sigma_1^{-1}\in I(\overline{M})$. But, by Remark \ref{realine}(i), $I(\overline{M})=0$. This shows that $\Sigma_1\#\Sigma^{-1}\#\Sigma_1^{-1}\cong \mathbb{S}^7$. Hence $\Sigma\cong \mathbb{S}^7$. This proves that $\Theta_7$ acts freely on $\mathcal{C}^{Diff}(M)$.
\end{proof}
\begin{remark}\rm
Let $M$ and $\widetilde{M}$ be as in Theorem \ref{diffclass}. Then $\Theta_7$ does not act transitively on $\mathcal{C}^{Diff}(M)$, since $M$ and $\widetilde{M}$ are not PL-homeomorphic.
\end{remark}
\begin{theorem}
Let $M$ be a closed smooth manifold which is homotopy equivalent to $\mathbb{R}\textbf{P}^7$. Then $M$ has exactly $56$ distinct differentiable structures up to diffeomorphism. Moreover, if $N$ is a closed smooth manifold homeomorphic to $M$, then there is a unique homotopy sphere $\Sigma\in \Theta_7$ such that $N$ is either diffeomorphic to $M\#\Sigma$ or $\widetilde{M}\#\Sigma$, where $\widetilde{M}$ is the specific closed smooth manifold given by Theorem \ref{plexam}.
\end{theorem}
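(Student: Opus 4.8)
The plan is to upgrade the concordance classification of Theorem \ref{diffclass} to a classification up to diffeomorphism; the only new ingredients needed are the triviality of the relevant inertia groups (Lemma \ref{inerreal} and Remark \ref{realine}(i)) together with the fact that $M$ and $\widetilde M$ are already distinguished at the PL level (Theorem \ref{plexam}(ii)).

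First I would note that concordant smooth structures have diffeomorphic underlying manifolds, so by Theorem \ref{diffclass} every closed smooth manifold homeomorphic to $M$ is diffeomorphic to one of the $56$ manifolds $M\#\Sigma$ or $\widetilde M\#\Sigma$ with $\Sigma\in\Theta_7$; hence there are at most $56$ diffeomorphism types. The substance is to check these $56$ are pairwise non-diffeomorphic, which I would split into three cases. (i) If $M\#\Sigma$ is diffeomorphic to $M\#\Sigma'$, then $M\#(\Sigma\#(\Sigma')^{-1})$ is diffeomorphic to $M$, so $\Sigma\#(\Sigma')^{-1}\in I(M)$, which is trivial by Remark \ref{realine}(i); hence $\Sigma\cong\Sigma'$. (ii) Since $\widetilde M$ is homeomorphic to $M$ and therefore homotopy equivalent to $\mathbb{R}\textbf{P}^7$, Remark \ref{realine}(i) also gives $I(\widetilde M)=0$, and the same argument shows that $\widetilde M\#\Sigma$ diffeomorphic to $\widetilde M\#\Sigma'$ forces $\Sigma\cong\Sigma'$. (iii) A diffeomorphism $M\#\Sigma\to\widetilde M\#\Sigma'$ is in particular a PL-homeomorphism; since every homotopy $7$-sphere is PL-homeomorphic to $\mathbb{S}^7$, the manifolds $M\#\Sigma$ and $M$ (resp.\ $\widetilde M\#\Sigma'$ and $\widetilde M$) are PL-homeomorphic, so we would obtain a PL-homeomorphism $M\to\widetilde M$, contradicting Theorem \ref{plexam}(ii). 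Combining the upper bound with (i)--(iii) gives exactly $56$ diffeomorphism types.

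For the ``moreover'' statement, let $N$ be a closed smooth manifold homeomorphic to $M$ and fix a homeomorphism $f:N\to M$; then $[N,f]\in\mathcal{C}^{Diff}(M)$, so Theorem \ref{diffclass} provides a homotopy $7$-sphere $\Sigma$ with $[N,f]=[M\#\Sigma,\mathrm{Id}]$ or $[N,f]=[\widetilde M\#\Sigma,k\circ\mathrm{Id}]$, and since concordance implies diffeomorphism, $N$ is diffeomorphic to $M\#\Sigma$ or to $\widetilde M\#\Sigma$. Uniqueness of $\Sigma$, and of which of the two alternatives holds, is exactly cases (i)--(iii): the diffeomorphism type of $N$ already determines which family it belongs to and, within that family, determines $\Sigma\in\Theta_7$.

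I expect the only genuinely delicate point to be case (iii): one must be sure that a diffeomorphism between the connected sums descends to a PL-homeomorphism between $M$ and $\widetilde M$, which uses both that a diffeomorphism is a PL map in these dimensions and that $\Theta_7^{PL}=0$, so that $M\#\Sigma$ is PL-homeomorphic to $M$ (and $\widetilde M\#\Sigma'$ to $\widetilde M$). Everything else is a formal consequence of Theorems \ref{pdiff}, \ref{plexam}, \ref{diffclass} and Lemma \ref{inerreal}.
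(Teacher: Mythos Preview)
Your proof is correct and follows essentially the same route as the paper: pass from the concordance classification of Theorem \ref{diffclass} to diffeomorphism types, and then invoke $I(M)=I(\widetilde M)=0$ from Remark \ref{realine}(i) to distinguish the $56$ manifolds. You are in fact more explicit than the paper on case (iii): the paper's proof only records $I(M)=I(\widetilde M)=0$ and jumps to the count of $56$, leaving the cross-case (that no $M\#\Sigma$ is diffeomorphic to any $\widetilde M\#\Sigma'$) implicit in the PL distinction of Theorem \ref{plexam}(ii), whereas you spell out the needed step via $\Theta_7^{PL}=0$.
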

\begin{proof}
Let $N$ be a closed smooth manifold homeomorphic to $M$ and let $f:N\to M$ be a homeomorphism. Then $(N,f)$ represents an element in $\mathcal{C}^{Diff}(M)$. By Theorem \ref{diffclass}, there is a unique homotopy sphere $\Sigma\in \Theta_7$ such that $N$ is either concordant to $(M\#\Sigma, Id)$ or $(\widetilde{M}\#\Sigma, k\circ Id)$. This implies that $N$ is either diffeomorphic to $M\#\Sigma$ or $\widetilde{M}\#\Sigma$. By Remark \ref{realine}(i), $I(M)=I(\widetilde{M})=0$. Therefore there is a unique homotopy sphere $\Sigma\in \Theta_7$ such that $N$ is either diffeomorphic to $M\#\Sigma$ or $\widetilde{M}\#\Sigma$. This implies that $M$ has exactly $56$ distinct differentiable structures up to diffeomorphism.
\end{proof}

\end{document}